\newtheorem{theorem}{Theorem}[section]
\newtheorem{lemma}[theorem]{Lemma}
\theoremstyle{definition}
\newtheorem{problem}[theorem]{Problem}
\theoremstyle{remark}
\newtheorem{remark}{Remark}
\title[Bounds on the number of higher-dimensional partitions]{Bounds on the number of higher-dimensional partitions} 
\author[Damir Yeliussizov]{Damir Yeliussizov}
\address{KBTU, Almaty, Kazakhstan}
\email{\href{mailto:yeldamir@gmail.com}{yeldamir@gmail.com}}
\subjclass[2020]{05A16, 05A17, 05A20, 11P21}
\begin{document}

\begin{abstract}
We establish some bounds on the number of higher-dimensional partitions by volume. 
In particular, we give bounds via vector partitions and MacMahon's numbers. 
\end{abstract}

\maketitle


\section{Introduction}

{Higher-dimensional partitions} were introduced by P. A. MacMahon \cite{macmahon} as a natural generalization of  
(one-dimensional) 
integer partitions and (two-dimensional) 
plane partitions. 
While these  objects 
arise in different 
fields such as 
algebra, geometry, combinatorics,  and statistical physics 
(
see e.g. 
\cite{mr, bgp, bbs, gov, ms, nekrasov} on some aspects), 
not much is known about 
higher-dimensional partitions 
in dimensions three or more (cf. \cite[\textsection 7.20]{sta}), including 
their asymptotics. 

\vspace{0.5em}

In this note, we prove some asymptotic lower and upper bounds on the number of $d$-dimensional partitions of volume $n$.


\subsection{Definition} 
A {\it $d$-dimensional partition} is a hypermatrix 
$\pi = (\pi_{i_1,\ldots, i_d})_{i_1,\ldots, i_d \ge 1}$ of nonnegative integers with only finitely many nonzero entries such that 
$
\pi_{i_1,\ldots, i_d} \ge \pi_{j_1,\ldots, j_d} \text{ for all } i_1 \le j_1, \ldots, i_d \le j_d.
$
The {\it volume} (or {\it size}) of $\pi$ is $|\pi| = \sum_{i_1,\ldots, i_d} \pi_{i_1,\ldots, i_d}.$ 

\vspace{0.5em}

For $d = 1$, this definition corresponds to the usual integer partitions. For $d = 2$, it defines 
{plane partitions}. We refer e.g. to \cite{sta2, andrews, sta, krat} on the theory of these well-studied objects.

\vspace{0.5em}

Let $\mathsf{p}_d(n)$ be the number of $d$-dimensional partitions of volume $n$. 

\subsection{MacMahon's conjecture and asymptotics} 
{\it MacMahon's numbers} $\mathsf{m}_d(n)$ are defined via the generating series
$$
\sum_{n \ge 0} \mathsf{m}_d(n)\, x^n = \prod_{n \ge 1} (1 - x^{n})^{-\binom{n + d - 2}{d - 1}}.
$$
MacMahon \cite{macmahon} conjectured that $\mathsf{p}_d(n) = \mathsf{m}_d(n)$ which is true for $d = 1,2$, but turns out to be false for $d \ge 3$ as shown 
in \cite{atkin}; 
in fact, 
$
\mathsf{m}_d(6) - \mathsf{p}_d(6) = \binom{d}{3} + \binom{d}{4}
$
(see \cite{andrews}).


An interpretation of MacMahon's numbers via $d$-dimensional partitions was given in \cite{ay}, where it is shown that (instead of the volume) $\mathsf{m}_d(n)$ computes 
$d$-dimensional partitions by another statistic called the {\it corner-hook volume}, which we recall in  \textsection\ref{secmac}.

\vspace{0.5em}

Throughout the paper, we consider the asymptotics as $n \to \infty$ assuming $d$ is fixed.

It is known that (see \cite{bgp})
$$
\lim_{n \to \infty} \frac{\log \mathsf{m}_d(n)}{n^{d/(d  + 1)}} = \gamma_d = {{d^{-d/(d+1)} (d+1)\, \zeta(d+1)^{1/(d + 1)}}},
$$
where $\zeta(s) = \sum_{n \ge 1} n^{-s}$ is the Riemann zeta function. (Using the explicit generating function, a more detailed asymptotics of $\mathsf{m}_d(n)$ can be found.) 

Concerning the asymptotics of $\mathsf{p}_d(n)$, 
it is 
known from 
\cite{bhatia} that $\log \mathsf{p}_{d}(n) = \Theta(n^{d/(d+1)})$. 
(Note that no explicit generating function is known for $\mathsf{p}_d(n)$ when $d \ge 3$.) 

Even though MacMahon's conjecture is false, based on some numerical simulations 
for (3-dimensional) solid partitions, 
it was conjectured in \cite{mr} 
that it might hold asymptotically, i.e. $\log \mathsf{p}_3(n) \sim \log \mathsf{m}_3(n)$. 
Based on experiments extended to higher dimensions, it was conjectured more generally in \cite{bgp} that
$\log \mathsf{p}_d(n) \sim \log \mathsf{m}_d(n) \sim \gamma_d\, n^{d/(d + 1)}.$
However, later experiments for solid partitions reported in \cite{dg} suggested that the conjecture of \cite{mr} appears to be false and $\mathsf{p}_3(n)$ grows faster than $\mathsf{m}_3(n)$.


\subsection{Asymptotic bounds}



We prove the following asymptotic bounds.

\begin{theorem}\label{thm:one}
For all sufficiently large $n$, 
we have the following bounds: 
$$
\alpha_d \, {n^{d/(d+1)}} \le {\log \mathsf{p}_d(n)}  \le \beta_d \, {n^{d/(d+1)}} + d \log n, 
$$
where the constants are
$$
\alpha_d = \frac{(d+1)}{(d+1)!^{1/(d+1)}}  
\cdot \delta_d ~\text{ for some } \delta_d > \log 2, 
\qquad \beta_d = (d+1) \zeta(d+1)^{1/(d + 1)}. 
$$
\end{theorem}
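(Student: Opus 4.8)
The plan is to prove the two inequalities by entirely different methods: the lower bound by exhibiting an explicit exponentially large family of partitions (the ``vector partition'' family), and the upper bound by transferring to the corner-hook volume that is enumerated by $\mathsf{m}_d$ in \textsection\ref{secmac}. The guiding heuristic, which fixes the exponents and constants, is that an optimal $\pi$ of volume $n$ lives on a region of linear size $R \approx n^{1/(d+1)}$, so it has $\approx R^d = n^{d/(d+1)}$ essentially free cells; both bounds amount to estimating the per-cell entropy of these choices.

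For the lower bound I would fix a scale $M$ and work inside the simplex $\Sigma_M = \{\mathbf{i} \in \mathbb{Z}_{\ge 1}^d : i_1 + \cdots + i_d \le M\}$, placing on it the steep base $f(\mathbf{i}) = M + 1 - (i_1 + \cdots + i_d)$, which decreases by exactly $1$ along every coordinate step. The key point is that for \emph{any} $\eta : \Sigma_M \to \{0,1\}$ (extended by $0$ outside $\Sigma_M$), the sum $\pi = f + \eta$ is again a valid $d$-dimensional partition: along each unit step the drop of $f$ equals $1$, which absorbs the fluctuation $\eta_{\mathbf{i}+e_k} - \eta_{\mathbf{i}} \le 1$. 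These $2^{|\Sigma_M|}$ partitions are distinct (one recovers $\eta = \pi - f$) and have volume $|f| + |\eta|$. Given a large target $n$, I would choose $M$ with $n - |f|$ near $\tfrac{1}{2}|\Sigma_M|$ and count the $\eta$ with $|\eta| = n - |f|$, a central binomial coefficient of order $2^{|\Sigma_M|}$; a small auxiliary tuning region of $O(M^{d-1})$ cells lets me hit the volume exactly $n$ without affecting the exponential rate. Since $|\Sigma_M| = \tfrac{M^d}{d!}(1+o(1))$ and $|f| = \tfrac{M^{d+1}}{(d+1)!}(1+o(1))$, solving $|f| \approx n$ gives $M \approx ((d+1)!\,n)^{1/(d+1)}$ and
$$
\log \mathsf{p}_d(n) \ge |\Sigma_M|\log 2 - O(\mathrm{poly}) = \frac{((d+1)!)^{d/(d+1)}}{d!}\,\log 2 \cdot n^{d/(d+1)}(1-o(1)) = \frac{(d+1)}{(d+1)!^{1/(d+1)}}\,\log 2 \cdot n^{d/(d+1)}(1-o(1)),
$$
which is exactly $\alpha_d\, n^{d/(d+1)}$ with $\delta_d = \log 2$. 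To get the \emph{strict} inequality $\delta_d > \log 2$, I would enlarge the perturbation class from $\{0,1\}$ to all admissible $\eta \ge 0$ satisfying only $\eta_{\mathbf{i}+e_k} \le \eta_{\mathbf{i}} + 1$ (equivalently, $\eta$ is itself a partition that is $1$-Lipschitz outward); this class strictly contains $\{0,1\}^{\Sigma_M}$, and its exponential growth rate is the entropy of such Lipschitz height functions on the simplex, which exceeds $\log 2$. Evaluating (or merely lower-bounding below, and above) this entropy is the main obstacle on the lower-bound side, and is presumably why the theorem records only ``some $\delta_d > \log 2$'' rather than an explicit value.

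For the upper bound I would compare the volume with the corner-hook volume $\mathrm{chv}$, recalling from \textsection\ref{secmac} that $\mathsf{m}_d(m)$ enumerates $d$-dimensional partitions by $\mathrm{chv}$. The crucial lemma to establish is the cell-by-cell domination $\mathrm{chv}(\pi) \le d\,|\pi|$, i.e.\ each cell's corner-hook weight is at most $d$ times its volume weight. Granting this, $|\pi| = n$ forces $\mathrm{chv}(\pi) \le dn$, so $\mathsf{p}_d(n) \le \#\{\pi : \mathrm{chv}(\pi) \le dn\} = \sum_{m=0}^{dn}\mathsf{m}_d(m)$. Using MacMahon's explicit product and the elementary tail bound $\sum_{m \le dn}\mathsf{m}_d(m) \le x^{-dn}\prod_{k\ge 1}(1-x^k)^{-\binom{k+d-2}{d-1}}$ valid for every $0<x<1$, I would set $x = e^{-t}$ and optimize. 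The two competing terms balance at $t = (\zeta(d+1)/n)^{1/(d+1)}$, and since the logarithm of the product behaves like $\zeta(d+1)\,t^{-d}$, the leading contribution is $dn\,t + \zeta(d+1)t^{-d} = (d+1)\zeta(d+1)^{1/(d+1)}n^{d/(d+1)} = \beta_d\, n^{d/(d+1)}$; the gain of the factor $d^{d/(d+1)}$ over $\gamma_d$ enters precisely through the evaluation at argument $dn$. The polynomial factor from the summation, together with $dn+1 \le (n+1)^d$, accounts for the additive $d\log n$.

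I expect the genuine difficulties to be: (i) on the upper side, proving $\mathrm{chv}(\pi) \le d\,|\pi|$ with the sharp constant $d$ (a weaker constant $C$ would only yield $\beta_d = (d+1)(C^{d/(d+1)}/d^{d/(d+1)})\zeta(d+1)^{1/(d+1)}$, so the value $C=d$ is what produces the stated $\beta_d$), and controlling the lower-order terms of the MacMahon estimate tightly enough to reach the clean additive $d\log n$; and (ii) on the lower side, as noted, quantifying the entropy of the admissible Lipschitz perturbations to push $\delta_d$ strictly past $\log 2$. The matching powers of $n$ in the two bounds, $\Theta(n^{d/(d+1)})$, reconfirm the order of $\log \mathsf{p}_d(n)$, while the gap $\alpha_d < \beta_d$ quantifies our ignorance of the true constant and is consistent with the reported behavior $\mathsf{p}_3(n) > \mathsf{m}_3(n)$.
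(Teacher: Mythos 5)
Your proposal splits into two halves, and each half, as written, proves a strictly weaker statement than Theorem~\ref{thm:one}. On the lower bound, your staircase construction $\pi=f+\eta$ with $\eta\in\{0,1\}^{\Sigma_M}$ is valid and injective, but it yields only
$$
\liminf_{n\to\infty}\, n^{-d/(d+1)}\log \mathsf{p}_d(n)\ \ge\ \frac{(d+1)}{(d+1)!^{1/(d+1)}}\,\log 2,
$$
i.e.\ the constant $\alpha'_d$ of \cite{dai}, whereas the theorem asserts the bound with some $\delta_d$ \emph{strictly} greater than $\log 2$. You acknowledge this and propose enlarging the perturbation class to all outward-$1$-Lipschitz $\eta$, but strict containment of families does not imply a strictly larger exponential growth rate, and you explicitly defer the step that would quantify the gain; so the actual content of $\alpha_d$ is not proved. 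The paper closes exactly this gap with a concrete local device (Lemma~\ref{lemlb}): split the simplex into disjoint $3\times\cdots\times 3$ boxes and, in each box, allow either an arbitrary $\{0,1\}$-filling ($2^{3^d}$ choices) or a filling with exactly one entry $2$ whose forward neighbors all carry $0$ ($2^d\cdot 2^{3^d-d-1}$ choices), giving $3\cdot 2^{3^d-1}$ fillings per box; pushing these hypermatrices through the bijection $\Phi$ of \cite{ay} produces distinct partitions inside $\Delta_d(k)$ and hence $\delta_d\ge\log 2+3^{-d}\log\tfrac32$. A similar sparse-$2$'s device could be grafted onto your $f+\eta$ construction, but some such quantitative step is mandatory. (Separately, your exact-volume tuning is shaky: consecutive values of $|f|=\binom{M+1}{d+1}$ jump by roughly $|\Sigma_M|$, so you cannot always center the binomial coefficient; this is harmless, though, since working with $\widetilde{\mathsf{p}}_d(n)$ and monotonicity, as the paper does, costs only $O(\log n)$.)

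On the upper bound, your route through MacMahon's numbers is the one the paper records only as a remark in \textsection\ref{secmac}, not its main proof, and it has two problems. First, $|\pi|_{ch}\le d\,|\pi|$ is true but not ``cell-by-cell'': hooks of distinct corners can overlap (in different directions), so the proof needs the disjointness of same-direction arms, i.e.\ the paper's lemma $c_\ell(\pi)\le|\pi|$, which you only flag as to-be-established. Second, and more seriously, the saddle-point estimate cannot deliver the stated error term $+\,d\log n$: with $x=e^{-t}$ one has
$$
\log\prod_{k\ge 1}\left(1-x^k\right)^{-\binom{k+d-2}{d-1}}\ =\ \zeta(d+1)\,t^{-d}+\tfrac{d-2}{2}\,\zeta(d)\,t^{-(d-1)}+\cdots,
$$
and at the optimal $t\asymp n^{-1/(d+1)}$ the second term is positive of order $n^{(d-1)/(d+1)}$ for every $d\ge 3$, which dwarfs $d\log n$. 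So your method gives $\beta_d\,n^{d/(d+1)}+\Theta(n^{(d-1)/(d+1)})$, not the theorem's bound. The paper avoids this entirely: it proves $\widetilde{\mathsf{p}}_d(n)\le\widetilde{\mathsf{p}}(n,\ldots,n)$ via the corner statistics $c_\ell$ (Lemma~\ref{wp} and Theorem~\ref{lempp}) and then applies the clean, non-asymptotic B\'ar\'any--Vershik inequality (Lemma~\ref{lembv}), which yields exactly $\beta_d\,n^{d/(d+1)}+d\log n$. Your observation that $\gamma_d\, d^{d/(d+1)}=\beta_d$ is correct and nicely explains why the two routes share the same leading constant, but only the vector-partition route proves the theorem as stated.
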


This theorem 
improves an asymptotic result of \cite{bhatia} where it was shown that $\log \mathsf{p}_{d}(n) = \Theta(n^{d/(d+1)})$ without explicit bounds (which can be computed to give much wider range for the constants). 

After an earlier version of 
this paper was written, we learned about the very recent work \cite{dai},
where a slightly smaller lower bound constant $\alpha'_d = \frac{(d+1)}{(d+1)!^{1/(d+1)}} \log 2 - o(1)$ and 
a significantly larger upper bound  $\gamma_1 (d+1)^{\log(d+1)}$ 
(which optimizes the bound from \cite{bhatia}) were obtained. 
For the upper bound 
our approach
is different from \cite{bhatia, dai}; for the lower bound 
our estimate 
is similar to \cite{dai}, but the technique is different giving the constant $\alpha_d$ (cf. Theorem~\ref{coral}).

Comparing the lower bound with MacMahon's numbers, we have $\alpha_d > \gamma_d$ for $d \ge 7$  which 
shows 
that {$\mathsf{p}_d(n)$ is asymptotically larger than $\mathsf{m}_d(n)$ in dimensions $d \ge 7$}, i.e. 
$$
\liminf_{n \to \infty} 
\frac{\log \mathsf{p}_{d}(n)}{n^{d/(d+1) }} > \gamma_d 
$$
and hence the conjecture of \cite{bgp} is false.
We note that this was shown in \cite{dai} using the constant $\alpha'_d$. 
In dimensions $3 \le d \le 6$, an asymptotic comparison of $\mathsf{p}_d(n)$ and $\mathsf{m}_d(n)$ remains unknown.

\begin{remark}
Interestingly, all available numerical values of $\mathsf{p}_d(n)$ (see \cite{atkin, knuth, boltzmann, ekhad} and \cite[A000293, A000294]{oeis}) show that $\mathsf{m}_d(n) \ge \mathsf{p}_d(n)$ and that the sequences grow relatively close. The above limit inequality 
tells that this is delusive for asymptotics. 
\end{remark}


\begin{remark}
It is also open to prove the existence of 
$\lim_{n \to \infty } {n^{-d/(d+1)}} {\log \mathsf{p}_d(n)}$
(which is assumed in some literature referring to \cite{bhatia}). 
\end{remark}

\subsection{An upper bound via vector partitions} 
 
To prove the above results (specifically an upper bound) we give a comparison with numbers whose explicit generating functions are known. 


{\it Vector partition} (or {\it multipartition}) {\it numbers} $\mathsf{p}(n_1,\ldots, n_d)$ are defined via the multivariate generating series 
$$
\sum_{n_1,\ldots, n_d } \mathsf{p}(n_1,\ldots, n_d)\, x_1^{n_1} \cdots x_d^{n_d}  = \prod_{i_1,\ldots, i_d \ge 1} \left(1 - x_1^{i_1} \cdots x_d^{i_d} \right)^{-1}.
$$
(Note that for $d = 1$, we have $\mathsf{p}(n) = \mathsf{p}_1(n)$ is the usual number of partitions.)

\begin{theorem}\label{thmc}
The following inequality holds:
$$
\mathsf{p}_d(n) \le n^{d}\, {\mathsf{p}}(\underbrace{n, \ldots, n}_{d \text{ times}}). 
$$
\end{theorem}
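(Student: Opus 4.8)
The plan is to pass to the order-ideal description and then build an explicit map into vector partitions. Recall that a $d$-dimensional partition $\pi$ of volume $n$ is the same thing as a finite order ideal $S \subseteq \mathbb{Z}_{\ge 1}^{d+1}$ with $|S| = n$: one sets $S = \{(i_1,\ldots,i_d,k) : 1 \le k \le \pi_{i_1,\ldots,i_d}\}$, and the weak monotonicity of $\pi$ is exactly the condition that $S$ be downward closed. A first useful observation is that every nonzero cell satisfies $i_1,\ldots,i_d \le n$ and $\pi_{i_1,\ldots,i_d} \le n$, since a coordinate equal to $m$ forces at least $m$ cells; thus $S \subseteq [1,n]^{d+1}$. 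It therefore suffices to encode such order ideals by vector partitions of $(n,\ldots,n)$ with polynomially bounded fibers, the factor $n^d$ measuring the information we are allowed to discard.

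Next I would record $\pi$ through its column heights. Writing $D \subseteq [1,n]^d$ for the (order-ideal) support and $\pi_q$ for the height of the column over $q = (q_1,\ldots,q_d) \in D$, we have $\sum_{q \in D} \pi_q = n$, and the column tops $(q,\pi_q)$ already determine $\pi$. The goal is to turn this into a multiset of positive $d$-vectors summing to $(n,\ldots,n)$. Two extreme choices frame the problem: sending each column to the vector $q$ with multiplicity $\pi_q$ is injective but produces the marginals $\sum_{q} \pi_q\, q_k$, which can be as large as $\sim n^2$; sending each column to the constant vector $(\pi_q,\ldots,\pi_q)$ makes every marginal exactly $\sum_q \pi_q = n$, but only remembers the multiset of heights and so has exponentially large fibers (e.g.\ in the all-ones case the fiber recovers all $(d-1)$-dimensional partitions of $n$). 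The correct map must interpolate between these.

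The main obstacle, and the heart of the argument, is therefore to encode the positional (shape) data of $\pi$ into the $d$ coordinates of the parts while keeping every coordinate-marginal equal to $n$. My plan is to do this by a telescoping assignment: order the columns canonically and attach to each column top a $d$-vector built from successive coordinate differences along this order, so that in each coordinate the contributions telescope to the volume $n$, exactly as the multiplicity encoding $m_v = \lambda'_v - \lambda'_{v+1}$ does in the one-dimensional case. Such a difference encoding is reversible up to the finitely many boundary terms that telescoping leaves free; each of these is a single integer in $\{1,\ldots,n\}$, one per coordinate direction, which is precisely the source of the factor $n^d$ and the bound on the fibers. Checking that the differences can always be taken positive (so the image is a genuine vector partition) and that the residual ambiguity is at most $n^d$ is the delicate step; once it is in place, the inequality follows, and combined with the asymptotics of $\mathsf{p}(n,\ldots,n)$ extracted from its explicit product formula (the $+\,d\log n$ coming exactly from the $n^d$) it yields the upper bound of Theorem~\ref{thm:one}.
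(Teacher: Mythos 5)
There is a genuine gap: the encoding map that your whole argument rests on is never constructed. You correctly reduce the problem to finding an injective (or small-fiber) assignment from $d$-dimensional partitions of volume $n$ to multisets of positive vectors in $\mathbb{Z}_{\ge 1}^d$ with controlled coordinate sums, and you correctly observe that the two naive assignments fail. But the ``telescoping assignment'' that is supposed to interpolate between them is only described as a plan, and by your own admission the two properties that make or break it --- that the difference vectors can always be taken to have all coordinates positive (so that the image really is a vector partition), and that the fibers have size at most $n^d$ --- are left unchecked. These are not routine verifications: differences of coordinates along any ordering of the column tops can be zero or negative, and your own example of the constant-vector encoding shows how easily fibers become exponential. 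Moreover, your insistence that every coordinate marginal equal $n$ exactly makes the task strictly harder than necessary, and it is unclear that any difference encoding achieves it.

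The paper resolves precisely this difficulty with a tool you do not have: the bijection $\Phi$ between $\mathbb{N}$-hypermatrices and $d$-dimensional partitions from \cite{ay}, whose inverse records, for each $(i_1,\ldots,i_d)$, the number of corners of $\pi$ lying above it. Concretely, $\pi$ is sent to the multiset of projections $(i_1,\ldots,i_d)$ of its corners $(i_1,\ldots,i_d,i_{d+1})\in\mathrm{Cor}(\pi)$; injectivity is exactly the bijectivity of $\Phi$ (equivalently, identity $(\diamond)$, which yields Lemma~\ref{wp}), and the marginal control is the separate lemma $c_{\ell}(\pi)\le|\pi|$, proved by exhibiting disjoint segments $X_{\ell}$ inside $D(\pi)$ below each corner. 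Note also that in the paper the factor $n^d$ does not come from fiber sizes at all: the marginals $(c_1(\pi),\ldots,c_d(\pi))$ are only bounded by $n$, not equal to $n$, and the factor arises from summing $\mathsf{p}(i_1,\ldots,i_d)$ over all $(i_1,\ldots,i_d)\in[n]^d$ (Theorem~\ref{lempp}) together with monotonicity of vector partition numbers. So the correct fix for your argument is to drop the requirement of exact marginals, take the corner-projection multiset as the encoding, and then supply the two lemmas above; without them (or some substitute), the proposal does not constitute a proof.
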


To obtain this
upper bound 
we give a new combinatorial interpretation of vector partition numbers via $d$-dimensional partitions (see Lemma~\ref{wp}).
We also obtain an upper bound via MacMahon's numbers: $$\mathsf{p}_d(n) < dn \cdot \mathsf{m}_d(dn).$$

Our proofs 
use some tools developed in \cite{ay}.

\section{Preliminaries}

We use the following basic notation: $\mathbb{N}$ is the set of nonnegative integers; $\mathbb{Z}_+$ is the set of positive integers; $\{\mathbf{e}_1, \ldots, \mathbf{e}_d\}$ is the standard basis of $\mathbb{Z}^d$; and $[n] := \{1, \ldots, n \}$.

\vspace{0.5em}

A {\it $d$-dimensional $\mathbb{N}$-hypermatrix} is an array $\left(a_{i_1, \ldots, i_d}\right)_{i_1, \ldots, i_d \ge 1}$ of nonnegative integers 
with finite support (i.e. with only finitely many nonzero elements). 
A {\it $d$-dimensional partition} is a $d$-dimensional 
$\mathbb{N}$-hypermatrix 
$\left(\pi_{i_1, \ldots, i_d} \right)_{i_1,\ldots, i_d \ge 1}$ 
such that 
$$
\pi_{i_1, \ldots, i_d} \ge \pi_{j_1, \ldots, j_d} ~\text{ for }~ i_1 \le j_1, \ldots, i_d \le j_d.
$$
Let 
$\mathcal{P}^{(d)}$ be the set of $d$-dimensional partitions. 
For $\pi = (\pi_{i_1, \ldots, i_d}) \in \mathcal{P}^{(d)}$, the {\it volume} (or size) of $\pi$ denoted by $|\pi|$ is defined as 
$$
|\pi| = \sum_{i_1, \ldots, i_d} \pi_{i_1, \ldots, i_d}.
$$

Any partition $\pi$ is uniquely determined by its {\it diagram} $D(\pi)$ which is the set 
$$D(\pi) := \{(i_1, \ldots, i_d, i) \in \mathbb{Z}^{d+1}_+: 1 \le i \le \pi_{i_1, \ldots, i_d} \} 
$$
of cardinality $|D(\pi)| = |\pi|$. 

Let us note that 
for a set $\rho \subset \mathbb{Z}^{d+1}_+$, the following conditions are equivalent:
\begin{itemize}
\item[(1)] The set $\rho$ is the diagram of some $d$-dimensional partition.
\item[(2)] The set $\rho$ is finite and has the property that if $\mathbf{i} \in \mathbb{Z}^{d+1}_+$  and $\ell \in [d+1]$ satisfy $\mathbf{i} + \mathbf{e}_{\ell} \in \rho$, then $\mathbf{i} \in \rho$. 
\end{itemize}
Sometimes we may identify a partition with its diagram. 

\vspace{0.5em}

We denote by $\mathsf{p}_d(n)$ the number of $d$-dimensional partitions of size $n$. Let also $\widetilde{\mathsf{p}}_d(n)$ be the number of $d$-dimensional partitions of size at most $n$, so that for $ n > 1$ we have
$$
\mathsf{p}_d(n) < \widetilde{\mathsf{p}}_d(n) = \sum_{i \le n}^{} \mathsf{p}_d(i) \le n\cdot \mathsf{p}_d(n),
$$
and hence $\log \widetilde{\mathsf{p}}_d(n) = \log \mathsf{p}_d(n) + O(\log n)$ 
has the same main term of asymptotics as $n \to \infty$; in our case we have more precisely
$$\frac{\log \widetilde{\mathsf{p}}_{d}(n)}{n^{d/(d+1)}} = \frac{\log \mathsf{p}_{d}(n)}{n^{d/(d+1)}} + o(1).$$ 

\section{A map from hypermatrices to partitions}\label{secphi}

In this section we describe a map from hypermatrices to partitions and some of its properties studied in \cite{ay} which will be useful for proving our results.

\vspace{0.5em}

A lattice path in $\mathbb{Z}^d$ is called {\it directed} if it uses only steps of the form $\mathbf{i} \to \mathbf{i} + \mathbf{e}_{\ell}$ for $\mathbf{i} \in \mathbb{Z}^d$ and $\ell \in [d]$. 
For a $d$-dimensional $\mathbb{N}$-hypermatrix $A = (a_{i_1, \ldots, i_d})_{i_1, \ldots, i_d \ge 1}$ we define the following map: 
$$\Phi : A \longmapsto \pi = (\pi_{i_1,\ldots, i_d}), \qquad \pi_{i_1,\ldots, i_d} = \max_{\Pi : (i_1,\ldots, i_d) \to (\infty^d)} \sum_{(j_1,\ldots, j_d)\in \Pi } a_{j_1,\ldots, j_d},$$
where the sum is over directed lattice paths $\Pi$ which start at $(i_1,\ldots, i_d)$. Equivalently, we also have 
$$
\pi_{\mathbf{i}} = a_{\mathbf{i}} + \max_{\ell \in [d]} \pi_{\mathbf{i} + \mathbf{e}_{\ell}}, \quad \mathbf{i} \in \mathbb{Z}_+^{d}.
$$
Note that 
$\pi = \Phi(A) \in \mathcal{P}^{(d)}$ is a $d$-dimensional partition.

\vspace{0.5em}


For a partition $\pi \in \mathcal{P}^{(d)}$, define the set of {\it corners} of $\pi$ as follows:
$$
\mathrm{Cor}(\pi) := \left\{\mathbf{i} \in \mathbb{Z}^{d+1}_+ : \mathbf{i} \in D(\pi),\, \mathbf{i} + \mathbf{e}_{\ell} \not\in D(\pi) \text{ for all } \ell \in [d] \right\}.
$$
(Here $\{e_{\ell}\}$ is the standard basis in $\mathbb{Z}^{d+1}$.) For example, if $\pi \in \mathcal{P}^{(2)}$ is a plane partition then $$\mathrm{Cor}(\pi) = \{(i,j,k) : (i,j, k) \in D(\pi), (i+1, j, k), (i, j+1, k) \not\in D(\pi) \}.$$ 


\vspace{0.5em}

The map $\Phi$ has the following important properties.

\begin{theorem}[\cite{ay}] The following properties hold:

(i) The map $\Phi$  is a bijection between $d$-dimensional $\mathbb{N}$-hypermatrices and $d$-dimensional partitions. In particular, it is injective on any set of $\mathbb{N}$-hypermatrices. 

(ii) The inverse map $\Phi^{-1}$ can be described via projection of corners as follows. For $\pi \in \mathcal{P}^{(d)}$ and $A = (a_{i_1,\ldots, i_d}) = \Phi^{-1}(\pi)$, we have 
$$
a_{i_1,\ldots, i_d} = \left| \left\{i : (i_1,\ldots, i_d, i) \in \mathrm{Cor}(\pi) \right\} \right|.
$$
\end{theorem}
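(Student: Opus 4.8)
The plan is to distill from the definition of $\Phi$ a local recurrence and then read off its inverse as a count of corners, handling both parts at once. First I would check that the two descriptions of $\Phi$ coincide: splitting a directed path from $\mathbf{i}$ according to its first step $\mathbf{i} \to \mathbf{i} + \mathbf{e}_\ell$ shows that $\pi_{\mathbf{i}} = \max_{\Pi} \sum_{\mathbf{j} \in \Pi} a_{\mathbf{j}}$ obeys $\pi_{\mathbf{i}} = a_{\mathbf{i}} + \max_{\ell \in [d]} \pi_{\mathbf{i} + \mathbf{e}_\ell}$. Since $A$ has finite support, every path sum is bounded by $\sum_{\mathbf{j}} a_{\mathbf{j}}$ and $\pi_{\mathbf{i}} = 0$ once all coordinates of $\mathbf{i}$ are large, so the recurrence together with this vanishing-at-infinity condition determines $\pi$ uniquely from $A$. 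That $\pi = \Phi(A)$ is a genuine element of $\mathcal{P}^{(d)}$ is already recorded in the excerpt, so I may take well-definedness for granted.

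Rearranging the recurrence gives $a_{\mathbf{i}} = \pi_{\mathbf{i}} - \max_{\ell \in [d]} \pi_{\mathbf{i} + \mathbf{e}_\ell}$, which recovers $A$ from $\pi$ and hence yields injectivity, the first assertion of (i). For (ii) I would read this difference off the diagram. With $\mathbf{i} = (i_1, \ldots, i_d)$, a point $(i_1, \ldots, i_d, i)$ lies in $\mathrm{Cor}(\pi)$ precisely when $i \le \pi_{\mathbf{i}}$ (so it belongs to $D(\pi)$) and $i > \pi_{\mathbf{i} + \mathbf{e}_\ell}$ for every $\ell \in [d]$ (so each of the $d$ horizontal neighbours $(i_1, \ldots, i_\ell + 1, \ldots, i_d, i)$ leaves $D(\pi)$); crucially, the height direction is omitted from the corner condition. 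These two constraints amount to $\max_{\ell} \pi_{\mathbf{i} + \mathbf{e}_\ell} < i \le \pi_{\mathbf{i}}$, so the number of corners lying over the column $\mathbf{i}$ is exactly $\pi_{\mathbf{i}} - \max_{\ell} \pi_{\mathbf{i} + \mathbf{e}_\ell} = a_{\mathbf{i}}$, which is the stated formula.

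To complete the bijection in (i) it remains to prove surjectivity, and here I would run the inverse formula forward. Given any $\pi \in \mathcal{P}^{(d)}$, set $a_{\mathbf{i}} = \pi_{\mathbf{i}} - \max_\ell \pi_{\mathbf{i} + \mathbf{e}_\ell}$ and verify that $A = (a_{\mathbf{i}})$ is a legitimate $\mathbb{N}$-hypermatrix: nonnegativity is immediate from the partition inequalities $\pi_{\mathbf{i}} \ge \pi_{\mathbf{i} + \mathbf{e}_\ell}$, and finite support follows since $\pi_{\mathbf{i}} = 0$ forces all neighbours to vanish and thus $a_{\mathbf{i}} = 0$. By construction $\pi$ solves the defining recurrence for this $A$, so the uniqueness from the first paragraph gives $\Phi(A) = \pi$.

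The only step demanding genuine care is the equivalence of the two formulations of $\Phi$ and the accompanying well-definedness claims — finiteness of the path sums and the vanishing-at-infinity condition that pins down the unique solution of the recurrence. Once the recurrence $\pi_{\mathbf{i}} = a_{\mathbf{i}} + \max_{\ell} \pi_{\mathbf{i} + \mathbf{e}_\ell}$ is in hand, both (i) and (ii) reduce to short arguments, so I anticipate no serious obstacle: the theorem is essentially a repackaging of the max-weight directed-path recurrence together with a direct enumeration of lattice points in each vertical column.
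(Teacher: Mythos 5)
Your proof is correct, but note that the paper itself never proves this statement: it is imported verbatim from \cite{ay} as a known result, so there is no internal argument to compare against, and your proposal supplies a genuinely self-contained proof. The skeleton is sound: the path-splitting argument gives the recurrence $\pi_{\mathbf{i}} = a_{\mathbf{i}} + \max_{\ell \in [d]} \pi_{\mathbf{i}+\mathbf{e}_{\ell}}$; rearranging it recovers $A$ from $\pi$ and gives injectivity; the observation that a point $(i_1,\ldots,i_d,i)$ is a corner exactly when $\max_{\ell} \pi_{\mathbf{i}+\mathbf{e}_{\ell}} < i \le \pi_{\mathbf{i}}$ (using that the height direction $\ell = d+1$ is excluded from the corner condition, which you correctly flag as the crucial point) identifies $a_{\mathbf{i}}$ with the column corner count, giving (ii); and surjectivity follows by running the difference formula forward and invoking uniqueness of finitely supported solutions of the recurrence. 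One small imprecision to tighten: you state the vanishing condition as ``$\pi_{\mathbf{i}} = 0$ once \emph{all} coordinates of $\mathbf{i}$ are large,'' but the uniqueness argument needs the stronger (and true) statement that $\pi_{\mathbf{i}} = 0$ once \emph{some} coordinate exceeds a bound $N$, since a directed path may grow in only one coordinate; this stronger form is exactly what finite support gives, because the support of a finitely supported array is contained in a box $[N]^d$. With that wording fixed, iterating the recurrence $k > dN$ steps kills every endpoint term and yields $\pi = \Phi(A)$, completing both (i) and (ii) as you intend.
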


As an application, the following multivariate identity was proved in \cite{ay}: 
$$
\sum_{\pi \in \mathcal{P}^{(d)}} \prod_{(i_1,\ldots, i_d, i_{d+1}) \in \mathrm{Cor}(\pi)} x_{i_1,\ldots, i_d} = \prod_{i_1,\ldots, i_d \ge 1} \left(1 - x_{i_1,\ldots, i_d} \right)^{-1}. \eqno{(\diamond)}
$$

\section{Bounds on partitions inside a simplex}\label{secsimpl}
Let $\Delta_d(k)$ be the set of positive integer points inside the $(d+1)$-dimensional simplex  $x_1 + \ldots + x_{d+1} \le k$, i.e.
$$
\Delta_d(k) := \left\{(x_1,\ldots, x_{d+1}) \in \mathbb{Z}_{+}^{d+1} :  
x_1 + \ldots + x_{d+1} \le k \right\}.
$$
Note that $\Delta_d(k)$ is a diagram of a $d$-dimensional partition of size (for $k \in \mathbb{N}$)
$$
|\Delta_d(k)| = \binom{k}{d+1} = \frac{k^{d+1}}{(d+1)!} - O(k^d) \le \frac{k^{d+1}}{(d+1)!}.
$$ 

Let 
$$
A_d(k) := \left\{\pi \in \mathcal{P}^{(d)} :  D(\pi) \subseteq \Delta_d(k)
\right\}
$$
be the set of $d$-dimensional partitions whose diagram lies inside $\Delta_d(k)$ 
and denote by $a_d(k) := |A_d(k)|$ the number of such partitions.
(For example, $a_1(k)$ is the number of partitions whose diagram lies inside the triangle $\{(x,y) \in \mathbb{Z}^{2}_+ : x + y \le k\}$, and so $a_1(k)$ is the 
Catalan number.)

\vspace{0.5em}

Let $B_d(k)$ be the set of hypermatrices $B = (b_{\mathbf{i}})_{\mathbf{i} \in \Delta_{d - 1}(k - 1)}$ (with support in $\Delta_{d - 1}(k - 1)$) with entries $b_{\mathbf{i}} \in \{0,1,2 \}$ satisfying the following property: if $b_{\mathbf{i}} = 2$ then $\mathbf{i} + \mathbf{e}_{\ell}\in \Delta_{d - 1}(k - 1)$ and $b_{\mathbf{i} + \mathbf{e}_{\ell}} = 0$ for all $\ell \in [d]$. Denote by $b_d(k) := |B_d(k)|$ the number of such hypermatrices. 

\begin{lemma}
We have: 
$$
a_d(k) \ge b_d(k).
$$
\end{lemma}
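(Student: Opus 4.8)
The plan is to realize $b_d(k) \le a_d(k)$ by using the bijection $\Phi$ of \textsection\ref{secphi} to inject $B_d(k)$ into $A_d(k)$. Given $B = (b_\mathbf{i})_{\mathbf{i} \in \Delta_{d-1}(k-1)} \in B_d(k)$, I first extend it to a genuine $d$-dimensional $\mathbb{N}$-hypermatrix by declaring $b_\mathbf{i} = 0$ for $\mathbf{i} \notin \Delta_{d-1}(k-1)$; its support is finite and its entries lie in $\{0,1,2\} \subset \mathbb{N}$, so $\pi := \Phi(B) \in \mathcal{P}^{(d)}$ is a well-defined $d$-dimensional partition. Since $\Phi$ is a bijection between $d$-dimensional $\mathbb{N}$-hypermatrices and $d$-dimensional partitions — in particular injective — it suffices to prove that $\Phi(B) \in A_d(k)$ for every $B \in B_d(k)$, i.e. that $D(\pi) \subseteq \Delta_d(k)$; the inequality $a_d(k) \ge b_d(k)$ is then immediate.

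Writing $\mathbf{i} = (i_1,\ldots,i_d)$, the containment $D(\pi) \subseteq \Delta_d(k)$ is equivalent to $i_1 + \cdots + i_d + \pi_\mathbf{i} \le k$ for all $\mathbf{i}$, because the extreme diagram point sitting over $\mathbf{i}$ is $(i_1,\ldots,i_d,\pi_\mathbf{i})$. By the path description of $\Phi$ it is therefore enough to show that for each $\mathbf{i}$ and each directed lattice path $\Pi$ starting at $\mathbf{i}$ one has $\sum_{\mathbf{j} \in \Pi} b_\mathbf{j} \le k - (i_1 + \cdots + i_d)$.

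This path estimate is the heart of the argument, and I would prove it by stratifying $\mathbb{Z}_+^d$ according to the level $s(\mathbf{j}) = j_1 + \cdots + j_d$. A directed path raises $s$ by exactly one at each step and hence meets each level at most once, while $b_\mathbf{j}$ can be nonzero only when $s(\mathbf{j}) \le k-1$ by the support condition. So, with $s_0 = i_1 + \cdots + i_d$ and $m = k - s_0$, the values of $b$ along $\Pi$ form a sequence $v_0, \ldots, v_{m-1} \in \{0,1,2\}$ indexed by the levels $s_0, \ldots, k-1$. The defining property of $B_d(k)$ imposes two constraints: if $v_t = 2$ then the next vertex of $\Pi$ is $\mathbf{j} + \mathbf{e}_\ell$ with $b_{\mathbf{j}+\mathbf{e}_\ell} = 0$, forcing $v_{t+1} = 0$; and a $2$ cannot occur at level $k-1$, since that would require a neighbor outside $\Delta_{d-1}(k-1)$, so every $2$ is followed by an \emph{in-range} $0$. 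Distinct $2$'s force distinct $0$'s, whence the number of indices with $v_t = 0$ is at least the number with $v_t = 2$; consequently $\sum_t v_t = 2\,\#\{t : v_t = 2\} + \#\{t : v_t = 1\} \le \#\{t : v_t = 2\} + \#\{t : v_t = 1\} + \#\{t : v_t = 0\} = m$. Maximizing over $\Pi$ yields $\pi_\mathbf{i} \le m = k - (i_1 + \cdots + i_d)$.

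The main obstacle is exactly this combinatorial bound: the trivial estimate $b_\mathbf{j} \le 2$ only gives $\pi_\mathbf{i} \le 2m$, so one must genuinely exploit the $\{0,1,2\}$-restriction — including the boundary clause $\mathbf{i} + \mathbf{e}_\ell \in \Delta_{d-1}(k-1)$, which forbids a $2$ on the top face and is what makes ``each $2$ forces an in-range $0$'' valid — to replace the factor $2$ by $1$. The remaining ingredients (the extension to a full hypermatrix, injectivity of $\Phi$, and the reformulation of diagram containment as a level inequality) are routine.
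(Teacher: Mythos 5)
Your proof is correct, and its skeleton coincides with the paper's: extend $B$ by zeros to a full $\mathbb{N}$-hypermatrix, apply $\Phi$, reduce everything to the containment $D(\Phi(B)) \subseteq \Delta_d(k)$, and finish by injectivity of $\Phi$. The difference lies in how the key estimate $\pi_{\mathbf{i}} \le k - (i_1 + \cdots + i_d)$ is proved. The paper argues by induction on $m = k - i_1 - \cdots - i_d$ using the recursive description $\pi_{\mathbf{i}} = b_{\mathbf{i}} + \max_{\ell} \pi_{\mathbf{i} + \mathbf{e}_{\ell}}$, splitting into the cases $b_{\mathbf{i}} \le 1$ and $b_{\mathbf{i}} = 2$ and, in the latter case, unrolling the recursion one extra step because all successors of a $2$ carry a $0$. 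You instead work with the path-maximum description of $\Phi$ and show directly that the sum of entries along any directed path from $\mathbf{i}$ is at most $m$, by matching each $2$ on the path to the forced $0$ immediately after it; your observation that a $2$ cannot sit on the top level $k-1$ (so the matched $0$ stays within the $m$ relevant levels) is precisely where the clause $\mathbf{i} + \mathbf{e}_{\ell} \in \Delta_{d-1}(k-1)$ in the definition of $B_d(k)$ enters, playing the same role as the paper's base case $m = 1$. These are local and global versions of the same estimate: the induction is a bit shorter, while your matching argument makes more transparent why the entries average at most $1$ per level, and it avoids induction altogether. One pedantic point: the containment $D(\pi) \subseteq \Delta_d(k)$ is equivalent to your inequality only for those $\mathbf{i}$ with $\pi_{\mathbf{i}} \ge 1$; for $\mathbf{i}$ with $i_1 + \cdots + i_d \ge k$ the inequality as literally stated can fail while the containment imposes no condition. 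This is harmless, since for such $\mathbf{i}$ every point of a directed path lies outside $\Delta_{d-1}(k-1)$ and your path bound gives $\pi_{\mathbf{i}} = 0$ (the paper's proof restricts to $m \ge 0$ with the same implicit convention).
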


\begin{proof}
Let us show that $\Phi : B_d(k) \to A_d(k)$. 
For  $B = (b_{i_1,\ldots, i_d}) \in B_d(k)$ let $\pi = (\pi_{i_1,\ldots, i_d}) = \Phi(B)$.  
Note that $\pi \in \mathcal{P}^{(d)}$ is a $d$-dimensional partition. Let us show that $\pi \in A_d(k)$.  
We are going to prove by induction on $m = k - i_1 - \ldots - i_d \ge 0$ that $\pi_{i_1,\ldots, i_d} \le m$. For $m = 0$ the statement is obvious. For $m = 1$ we have $(i_1,\ldots, i_d) + \mathbf{e}_{\ell} \not\in \Delta_{d-1}(k-1)$ for all $\ell \in [d]$ and hence $\pi_{i_1,\ldots, i_d} = b_{i_1,\ldots, i_d} \le 1$. 
Assume now the statement holds for all $i'_1,\ldots, i'_d$ with $k - i'_1 - \cdots - i'_d < m$. To show that $\pi_{i_1,\ldots, i_d} \le m = k - i_1 - \cdots - i_d$ consider two cases. 

{\it Case 1.} If $b_{i_1,\ldots, i_d} \le 1$, then we have 
$$
\pi_{i_1,\ldots, i_d} = b_{i_1,\ldots, i_d} + \max_{\ell \in [d]} \pi_{(i_1, \ldots, i_d) + \mathbf{e}_{\ell}} \le 1 + m - 1 = m
$$
since $\pi_{(i_1,\ldots, i_d) + \mathbf{e}_{\ell}} \le m - 1$ by  assumption.

{\it Case 2.} If $b_{i_1,\ldots, i_d} = 2$, then by the condition we have $b_{j_1, \ldots, j_d}  = 0$ for all $(j_1,\ldots, j_d) = (i_1,\ldots, i_d) + \mathbf{e}_{\ell}$ and $\ell \in [d]$ and hence 
\begin{align*}
\pi_{i_1,\ldots, i_d} 
&= b_{i_1,\ldots, i_d} + \max_{(j_1,\ldots, j_d) = (i_1,\ldots, i_d) + \mathbf{e}_{\ell}} \left( b_{j_1,\ldots, j_d} + \max_{\ell' \in [d]} \pi_{(j_1,\ldots, j_d) + \mathbf{e_{\ell'}}}  \right) \le 2 + m - 2 = m
\end{align*}
since $\pi_{(j_1,\ldots, j_d) + \mathbf{e}_{\ell'}} \le m - 2$ by assumption.

So we showed that $i_1 + \ldots + i_d + \pi_{i_1,\ldots, i_d} \le k$ which means that $D(\pi) \subseteq \Delta_d(k)$ and $\pi \in {A}_d(k)$, i.e. $\Phi : B_d(k) \to A_d(k)$ as needed.
Since $\Phi$ is injective we have $|B_d(k)| \le |A_d(k)|$ which now implies the inequality. 
\end{proof}

\begin{lemma}\label{lemlb}
We have: 
$$
b_d(k) \ge \left(3 \cdot 2^{3^d - 1} \right)^{\binom{\lfloor (k-1)/3 \rfloor}{d}}. 
$$
\end{lemma}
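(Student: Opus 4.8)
The plan is to construct many elements of $B_d(k)$ by tiling a large portion of $\Delta_{d-1}(k-1)$ with disjoint cubical blocks and then counting, on each block, a family of local patterns that can be chosen independently of the others. Set $m := \lfloor (k-1)/3 \rfloor$. To each lattice point $\mathbf{p} \in \Delta_{d-1}(m)$ I associate the block $Q_{\mathbf{p}} := \{3\mathbf{p} - \mathbf{v} : \mathbf{v} \in \{0,1,2\}^d\}$ of $3^d$ cells. First I would verify the two geometric facts that make the scheme work: since $3m \le k-1$ and each coordinate of a point of $Q_{\mathbf{p}}$ is at least $3\cdot 1 - 2 = 1$ (as $p_i \ge 1$ and $v_i \le 2$), every block lies inside $\Delta_{d-1}(k-1)$; and because the entries of $\mathbf{v}$ lie in $\{0,1,2\}$ while $3(\mathbf{p}-\mathbf{p}')$ has entries divisible by $3$, distinct points $\mathbf{p}$ yield disjoint blocks. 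The number of blocks is then exactly $|\Delta_{d-1}(m)| = \binom{m}{d} = \binom{\lfloor (k-1)/3\rfloor}{d}$, which is the desired exponent.

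Next I would specify the admissible local patterns on a single block $Q_{\mathbf{p}}$. Call a cell $3\mathbf{p} - \mathbf{v}$ \emph{deep} if $\mathbf{v} \in \{1,2\}^d$; the key point is that every forward neighbor $\mathbf{i} + \mathbf{e}_\ell = 3\mathbf{p} - (\mathbf{v} - \mathbf{e}_\ell)$ of a deep cell again lies in $Q_{\mathbf{p}}$, since $v_\ell \ge 1$. I allow precisely those assignments of values in $\{0,1,2\}$ to the $3^d$ cells of the block that contain at most one entry equal to $2$, and such that whenever an entry equals $2$ that cell is deep and all of its $d$ forward neighbors are set to $0$. Counting these is elementary: there are $2^{3^d}$ patterns with no $2$ (each cell in $\{0,1\}$), and for each of the $2^d$ deep cells there are $2^{3^d - 1 - d}$ patterns whose unique $2$ sits there (the cell is $2$, its $d$ forward neighbors are $0$, and the remaining $3^d - 1 - d$ cells range freely over $\{0,1\}$). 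Hence each block admits
\[
2^{3^d} + 2^d \cdot 2^{3^d - 1 - d} = 2^{3^d} + 2^{3^d - 1} = 3 \cdot 2^{3^d - 1}
\]
admissible patterns, and these are pairwise distinct, since the location of the (at most one) $2$ and the binary values elsewhere are recoverable from the pattern.

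Finally I would assemble a global hypermatrix by choosing an admissible pattern on each block independently and setting every cell of $\Delta_{d-1}(k-1)$ outside all blocks equal to $0$. The resulting array has entries in $\{0,1,2\}$ and support in $\Delta_{d-1}(k-1)$, and it lies in $B_d(k)$ because the only cells carrying a $2$ are deep cells of some block, whose forward neighbors all lie in the same block and were set to $0$ (so both the in-domain and the vanishing conditions hold). Distinct tuples of block patterns give distinct hypermatrices, since the blocks are disjoint and their complement is fixed to $0$; this yields an injection from the product of the per-block families into $B_d(k)$, and the bound $b_d(k) \ge (3 \cdot 2^{3^d - 1})^{\binom{\lfloor (k-1)/3\rfloor}{d}}$ follows. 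The one place demanding care — and the crux of the argument — is the independence of the blocks: a priori a $2$ could force a $0$ on a cell of a neighboring block and destroy the product structure, and it is exactly the restriction of $2$'s to deep cells, whose forward neighbors never leave their own block, that rules this out.
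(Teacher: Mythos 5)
Your proof is correct and takes essentially the same approach as the paper: both tile $\Delta_{d-1}(k-1)$ by the disjoint $3 \times \cdots \times 3$ boxes indexed by $\Delta_{d-1}(\lfloor (k-1)/3 \rfloor)$, allow a $2$ only at one of the $2^d$ cells whose forward neighbors remain inside the box, and count $2^{3^d} + 2^d \cdot 2^{3^d - d - 1} = 3 \cdot 2^{3^d - 1}$ admissible patterns per box. Your write-up merely makes explicit a few details the paper leaves implicit, namely the disjointness of the boxes and the injectivity of the product-of-patterns construction.
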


\begin{proof}
Consider 
hypermatrices $B = (b_{\mathbf{i}})_{\mathbf{i} \in \Delta_{d-1}(k-1)} \in B_d(k)$ 
satisfying the following property: 
for $j_1,\ldots, j_d \in \mathbb{Z}_+$ in each $3 \times \cdots \times 3$ ($d$ times) box 
$$J = \{3j_1-2, 3 j_1 - 1, 3j_1 \} \times \cdots \times \{3j_d-2, 3j_d - 1, 3j_d \} \subset \Delta_{d-1}(k-1),$$ 
if $b_{\mathbf{i}} = 2$ for $\mathbf{i} \in J$ 
then $\mathbf{i} + \mathbf{e}_{\ell} \in J$  and $b_{\mathbf{i} + \mathbf{e}_{\ell}} =0$ for all $\ell \in [d]$. 
(We also set $b_{\mathbf{i}} = 0$ for $\mathbf{i}$ which do not belong to any box $J$.)
Note that for different $j_1,\ldots, j_d \in \mathbb{Z}_+$ the boxes $J$ are disjoint and the number of such boxes $J$ is equal to $\binom{\lfloor (k-1)/3 \rfloor}{d}$ (since it is the number of $j_1,\ldots, j_d$ satisfying $3j_1 + \ldots + 3j_d \le k - 1$). For every such box $J$ there are $2^{3^d}$ sub-hypermatrices $(b_{\mathbf{i}})_{\mathbf{i} \in J}$ with $b_{\mathbf{i}} \in \{0, 1\}$ and $2^{d} \times 2^{3^d - d - 1}$ sub-hypermatrices $(b_{\mathbf{i}})_{\mathbf{i} \in J}$ which contain one entry $b_{\mathbf{i}} = 2$ for $\mathbf{i} \in J$ (since  $\mathbf{i} = (i_1,\ldots, i_d) \in J$ can be chosen in $2^d$ ways where $i_{\ell} \in \{3j_{\ell} - 1, 3 j_{\ell} - 2 \}$, 
and the remaining $2^{3^d - d - 1}$ elements of $J \setminus S$ where $S = \{\mathbf{i} \} \cup \{\mathbf{i} + \mathbf{e}_{\ell}  : \ell \in [d]\}$ 
can be filled in $B$ with  $\{0,1 \}$ arbitrarily according to the described property). Therefore, we have 
$$|B_d(k)| \ge \left(2^{3^d} + 2^{d} \times 2^{3^d - d - 1} \right)^{\binom{\lfloor (k-1)/3 \rfloor}{d}}$$
as needed.
\end{proof}

\begin{remark}
Counting in $B_d(k)$ only 
hypermatrices $(b_{\mathbf{i}})_{\mathbf{i} \in \Delta_{d-1}(k-1)}$ with $b_{\mathbf{i}} \in \{0,1\}$ gives a weaker inequality $b_d(k) \ge 2^{|\Delta_{d - 1}(k - 1)|} = 2^{\binom{k - 1}{d}}.$
\end{remark}

\begin{remark}\label{aup}
To get an upper bound on $a_d(k)$, it is not difficult to show that for $k \ge d$ we have
$$
a_{d}(k) \le a_{d-1}(k-1) \cdot a_{d-1}(k-2) \cdots a_{d-1}(d).
$$
From this inequality, by induction on $d$ we obtain that 
\begin{align*}
a_d(k) \le 2^{2 \binom{k-1}{d}}. 
\end{align*}
\end{remark}

\begin{lemma}\label{lempa}
We have: 
$$\widetilde{\mathsf{p}}_{d}(n) \ge a_{d}(k) \text{ for } k  = \lfloor ((d+1)!\, n)^{1/(d+1)} \rfloor.$$ 
\end{lemma}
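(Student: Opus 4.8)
The plan is to show that every partition counted by $a_d(k)$ has volume at most $n$, so that $A_d(k)$ embeds into the set of $d$-dimensional partitions of size at most $n$, and then to conclude by comparing cardinalities. This reduces the statement to a purely volumetric estimate governed by the choice of $k$.

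First I would recall that for $\pi \in A_d(k)$ we have $D(\pi) \subseteq \Delta_d(k)$ by the definition of $A_d(k)$, and that $|\pi| = |D(\pi)|$ as noted in the preliminaries. Hence the volume of $\pi$ is bounded by the number of lattice points in the simplex, namely $|\pi| = |D(\pi)| \le |\Delta_d(k)| = \binom{k}{d+1}$.

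Next I would use the inequality $\binom{k}{d+1} \le \frac{k^{d+1}}{(d+1)!}$ already recorded above, together with the choice $k = \lfloor ((d+1)!\, n)^{1/(d+1)} \rfloor$. Since flooring only decreases the value, we have $k \le ((d+1)!\, n)^{1/(d+1)}$ and therefore $k^{d+1} \le (d+1)!\, n$, which yields $|\pi| \le \frac{k^{d+1}}{(d+1)!} \le n$. Thus every $\pi \in A_d(k)$ is a $d$-dimensional partition of size at most $n$, so $A_d(k)$ is contained in the set counted by $\widetilde{\mathsf{p}}_d(n)$, and taking cardinalities gives $\widetilde{\mathsf{p}}_d(n) \ge |A_d(k)| = a_d(k)$.

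There is no real obstacle here: the argument is a one-line containment followed by a volume bound. The only point requiring a moment's care is that the floor in the definition of $k$ does not spoil the inequality $k^{d+1} \le (d+1)!\, n$, but this is immediate since $\lfloor \cdot \rfloor$ can only make $k$ smaller, preserving the bound in the required direction.
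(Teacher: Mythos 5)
Your proof is correct and is essentially identical to the paper's: both bound the volume of any $\pi \in A_d(k)$ by $|\Delta_d(k)| \le k^{d+1}/(d+1)! \le n$ (using that the floor only decreases $k$) and conclude by containment in the set counted by $\widetilde{\mathsf{p}}_d(n)$. You simply spell out the intermediate steps a bit more explicitly.
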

\begin{proof}
Note that 
$$|\Delta_{d}(k)| \le \frac{k^{d+1}}{(d+1)!} \le n$$
and so every $d$-dimensional partition in the set $A_{d}(k)$ has volume at most $n$, which proves the inequality.
\end{proof}


\begin{theorem}\label{coral}
We have:
$$
\liminf_{n \to \infty} \frac{\log \widetilde{\mathsf{p}}_{d}(n)}{n^{d/(d+1)}} \ge 
\alpha_d,  
$$
where 
$$
 \qquad \alpha_d =  \frac{(d+1) }{(d+1)!^{1/(d+1)}} \cdot \delta_d, \qquad 
 \delta_d = \liminf_{k \to \infty} \frac{\log a_d(k)}{k^d/d!} \ge \log 2 + 3^{-d} \log \frac{3}{2}. 
$$
\end{theorem}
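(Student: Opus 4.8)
The plan is to chain together the three lemmas just established, tracking how the estimates combine as $n \to \infty$. The target quantity is $\liminf_n \frac{\log \widetilde{\mathsf{p}}_d(n)}{n^{d/(d+1)}}$, and the strategy is to bound $\widetilde{\mathsf{p}}_d(n)$ from below by $a_d(k)$ for a well-chosen $k$, then use the definition of $\delta_d$ to control $\log a_d(k)$ asymptotically.

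First I would invoke Lemma~\ref{lempa} with the prescribed choice $k = \lfloor ((d+1)!\,n)^{1/(d+1)} \rfloor$, giving $\widetilde{\mathsf{p}}_d(n) \ge a_d(k)$. Taking logarithms and dividing by $n^{d/(d+1)}$, I get
$$
\frac{\log \widetilde{\mathsf{p}}_d(n)}{n^{d/(d+1)}} \ge \frac{\log a_d(k)}{n^{d/(d+1)}} = \frac{\log a_d(k)}{k^d/d!} \cdot \frac{k^d/d!}{n^{d/(d+1)}}.
$$
The first factor has liminf equal to $\delta_d$ by definition. For the second factor I would use $k = ((d+1)!\,n)^{1/(d+1)} - O(1)$, so that $k^d = ((d+1)!)^{d/(d+1)} n^{d/(d+1)}(1 + o(1))$, whence
$$
\frac{k^d/d!}{n^{d/(d+1)}} \to \frac{((d+1)!)^{d/(d+1)}}{d!} = \frac{(d+1)!^{d/(d+1)}}{(d+1)!/(d+1)} = \frac{(d+1)}{(d+1)!^{1/(d+1)}}.
$$
Multiplying the two limits yields exactly $\alpha_d = \frac{(d+1)}{(d+1)!^{1/(d+1)}}\,\delta_d$, which is the claimed liminf bound.

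It remains to establish the stated lower bound $\delta_d \ge \log 2 + 3^{-d}\log\frac{3}{2}$ on the constant $\delta_d$ itself. Here I would combine the first two lemmas: $a_d(k) \ge b_d(k) \ge \bigl(3 \cdot 2^{3^d-1}\bigr)^{\binom{\lfloor (k-1)/3\rfloor}{d}}$. Taking logarithms gives $\log a_d(k) \ge \binom{\lfloor (k-1)/3\rfloor}{d}\bigl((3^d-1)\log 2 + \log 3\bigr)$. Dividing by $k^d/d!$ and letting $k \to \infty$, the binomial coefficient $\binom{\lfloor(k-1)/3\rfloor}{d}$ is asymptotic to $(k/3)^d/d! = k^d/(3^d\, d!)$, so the ratio tends to $3^{-d}\bigl((3^d-1)\log 2 + \log 3\bigr) = \log 2 + 3^{-d}(\log 3 - \log 2) = \log 2 + 3^{-d}\log\frac{3}{2}$, giving the bound on $\delta_d$.

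The routine parts are the asymptotic expansions of $k^d$ and of the binomial coefficient; the only genuine subtlety is the careful passage from the $\liminf$ of a product to the product involving the $\liminf$ of the first factor. Since the second factor converges to a genuine limit (not merely a liminf), the product $\liminf$ is the product of the liminf of the first factor and the limit of the second, so no spurious loss occurs. I expect the main obstacle to be purely bookkeeping: ensuring the floor functions in both $k$ and $\lfloor(k-1)/3\rfloor$ contribute only lower-order terms that vanish after normalization, which they do since they perturb $k$ by $O(1)$.
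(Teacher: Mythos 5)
Your proposal is correct and follows essentially the same route as the paper: the same choice $k = \lfloor ((d+1)!\,n)^{1/(d+1)} \rfloor$ in Lemma~\ref{lempa}, the same factorization into $\frac{\log a_d(k)}{k^d/d!}$ times a factor converging to $\frac{(d+1)}{(d+1)!^{1/(d+1)}}$, and the same chaining of the two counting lemmas (noting $3\cdot 2^{3^d-1} = 2^{3^d}\cdot\tfrac32$) to get $\delta_d \ge \log 2 + 3^{-d}\log\tfrac32$. Your explicit remark on why the liminf of the product splits correctly (the second factor having a genuine positive limit) is a point the paper leaves implicit.
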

\begin{proof}
For $k  = \lfloor ((d+1)!\, n)^{1/(d+1)} \rfloor$ we have $n^{d/(d+1)} \sim (d+1)!^{-d/(d+1)} k^d = \frac{d!}{(d+1)!^{d/(d+1)}}\cdot k^d/d! = \frac{(d+1)!^{1/(d+1)}}{(d+1)} \cdot k^d/d!$ and hence
\begin{align*}
\liminf_{n \to \infty} \frac
{\log \widetilde{\mathsf{p}}_{d}(n)}
{n^{d/(d+1)}} 
&\ge  \frac{(d+1)}{(d+1)!^{1/(d+1)}} \liminf_{k \to \infty} \frac{\log a_{d}(k)}{k^d/d!} = \frac{(d+1)}{(d+1)!^{1/(d+1)}} \cdot \delta_d = \alpha_d.  
\end{align*}
Using the above lemmas we have 
$$
\delta_d = \liminf_{k \to \infty} \frac{\log a_d(k)}{k^d/d!} \ge \liminf_{k \to \infty} \frac{\log b_d(k)}{k^d/d!} \ge \lim_{k \to \infty} \frac{\binom{\lfloor (k-1)/3 \rfloor }{d} \log \left(2^{3^d} \cdot \frac{3}{2} \right)}{k^d/d!} = \log 2 + 3^{-d} \log \frac{3}{2}.
$$
\end{proof}

\begin{remark}
One can also lower bound $\widetilde{\mathsf{p}}_{d}(n)$ via the number of partitions whose diagram lies inside the box $[m]^{d+1}$ for $m^{d+1} \le n$. Bounds on the number of partitions inside a box were obtained in \cite{ms}. However this gives a lower bound smaller than $\alpha_d$. 
\end{remark}

\begin{remark}
To compare our lower bound constant $\alpha_d$ with the constant $\gamma_d$ from MacMahon's numbers, a direct computation shows that  
$\alpha_7 > 1.47347 > \gamma_7 \approx 1.45831$. It is a routine exercise to show that $\alpha_d > \gamma_d$ for $d \ge 7$. 
\end{remark}

\begin{remark}
The constant $\alpha_d$ can be further slightly improved by giving a better lower bound in Lemma~\ref{lemlb} (e.g. by filling more $2$'s for hypermatrices in $B_d(k)$ or dividing into boxes as $J$ of other sizes), which we did not pursue here.   
\end{remark}

\begin{remark}
The upper bound on $a_d(k)$ mentioned in Remark~\ref{aup} tells that bounding the number of partitions inside the simplex $\Delta_d$ cannot give a lower bound constant on $n^{-d/(d+1)} {\log \mathsf{p}}_d(n)$ better than $\frac{(d+1)}{(d+1)!^{1/(d+1)}} \cdot 2 \log 2$. 
\end{remark}

\begin{problem}
Find 
$
\lim_{k \to \infty} ({k^d/d!})^{-1} {\log a_d(k)} \in (\log 2, \log 4]
$
and prove its existence. 
\end{problem}

\section{An upper bound via vector partitions}\label{secvec}

Recall that vector partition numbers $\mathsf{p}(n_1,\ldots, n_d)$ are defined via the multivariate generating series 
$$
\sum_{n_1,\ldots, n_d } \mathsf{p}(n_1,\ldots, n_d)\, x_1^{n_1} \cdots x_d^{n_d}  = \prod_{i_1,\ldots, i_d \ge 1} \left(1 - x_1^{i_1} \cdots x_d^{i_d} \right)^{-1}.
$$

Recall also that for a partition $\pi \in \mathcal{P}^{(d)}$, the set of {corners} of $\pi$ is defined as follows:
$$
\mathrm{Cor}(\pi) := \left\{\mathbf{i} \in \mathbb{Z}^{d+1}_+ : \mathbf{i} \in D(\pi),\, \mathbf{i} + \mathbf{e}_{\ell} \not\in D(\pi) \text{ for all } \ell \in [d] \right\}.
$$
For $\pi \in \mathcal{P}^{(d)}$ and $\ell \in [d]$, define the following statistic: 
$$
c_{\ell}(\pi) := \sum_{(i_1,\ldots, i_{d+1}) \in \mathrm{Cor}(\pi)} i_{\ell}.
$$

We show the following interpretation of vector partition numbers via $d$-dimensional partitions using the c-statistic.

\begin{lemma}\label{wp}
We have: 
$$
\mathsf{p}(n_1,\ldots, n_d) = \left| \left\{\pi \in \mathcal{P}^{(d)} : (c_1(\pi), \ldots, c_d(\pi)) = (n_1,\ldots, n_d) \right\} \right|.
$$
\end{lemma}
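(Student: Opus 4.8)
The plan is to derive Lemma~\ref{wp} as a specialization of the multivariate identity $(\diamond)$ established in \cite{ay}, rather than building a bijection from scratch. The crucial observation is that the variable attached to a corner $(i_1,\ldots,i_d,i_{d+1})$ in $(\diamond)$ depends only on the first $d$ coordinates $x_{i_1,\ldots,i_d}$, so substituting an appropriate monomial for each $x_{i_1,\ldots,i_d}$ will turn the corner-product into one that records exactly the marginal sums $c_1(\pi),\ldots,c_d(\pi)$.

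Concretely, I would introduce new variables $y_1,\ldots,y_d$ and perform the substitution
$$
x_{i_1,\ldots,i_d} \longmapsto y_1^{i_1}\cdots y_d^{i_d}
$$
in both sides of $(\diamond)$. On the left-hand side, for a fixed $\pi$ the corner-product becomes
$$
\prod_{(i_1,\ldots,i_d,i_{d+1})\in \mathrm{Cor}(\pi)} y_1^{i_1}\cdots y_d^{i_d} = y_1^{c_1(\pi)}\cdots y_d^{c_d(\pi)}
$$
directly from the definition of $c_\ell(\pi)$, so the left-hand side becomes $\sum_{\pi\in\mathcal{P}^{(d)}} y_1^{c_1(\pi)}\cdots y_d^{c_d(\pi)}$. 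On the right-hand side the product $\prod_{i_1,\ldots,i_d\ge 1}(1-x_{i_1,\ldots,i_d})^{-1}$ becomes $\prod_{i_1,\ldots,i_d\ge 1}(1-y_1^{i_1}\cdots y_d^{i_d})^{-1}$, which is precisely the defining generating series of the vector partition numbers $\mathsf{p}(n_1,\ldots,n_d)$. Comparing the coefficients of $y_1^{n_1}\cdots y_d^{n_d}$ on the two sides then yields the claim.

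The one point requiring care — and the main obstacle — is justifying that this substitution is an admissible specialization of formal power series, so that it preserves $(\diamond)$ and produces well-defined coefficients. Since $i_1,\ldots,i_d\ge 1$, each variable $x_{i_1,\ldots,i_d}$ is sent to a monomial of positive total degree, and I would verify that only finitely many source monomials $\prod x_{i_1,\ldots,i_d}^{m_{i_1,\ldots,i_d}}$ map to a given target $y_1^{n_1}\cdots y_d^{n_d}$: the relations $\sum_{i_1,\ldots,i_d} i_\ell\, m_{i_1,\ldots,i_d}=n_\ell$ force every index with $m_{i_1,\ldots,i_d}>0$ to satisfy $i_\ell\le n_\ell$ and force $\sum m_{i_1,\ldots,i_d}\le n_1$, leaving only finitely many possibilities. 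On the combinatorial side this is exactly the assertion that only finitely many $\pi$ realize a prescribed value of $(c_1(\pi),\ldots,c_d(\pi))$, which in turn guarantees that the left-hand coefficients are finite. With this admissibility established, the substitution extends to a ring homomorphism on the relevant completed power series ring, the equality $(\diamond)$ is preserved under it, and extracting the coefficient of $y_1^{n_1}\cdots y_d^{n_d}$ completes the proof.
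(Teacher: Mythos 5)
Your proposal is correct and is essentially the paper's own proof: the paper likewise specializes the identity $(\diamond)$ via $x_{i_1,\ldots,i_d} \mapsto x_1^{i_1}\cdots x_d^{i_d}$, identifies the resulting left-hand side with $\sum_{\pi} x_1^{c_1(\pi)}\cdots x_d^{c_d(\pi)}$ by the definition of the c-statistic, and compares coefficients against the defining series of $\mathsf{p}(n_1,\ldots,n_d)$. Your additional verification that the substitution is an admissible formal-power-series specialization is a point the paper leaves implicit, but it is the same argument.
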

\begin{proof}
Firstly, note that by definition of c-statistic we have
$$
\sum_{\pi \in \mathcal{P}^{(d)}} x_1^{c_1(\pi)} \cdots x_d^{c_d(\pi)} = \sum_{\pi \in \mathcal{P}^{(d)}} \prod_{(i_1,\ldots, i_d, i_{d+1}) \in \mathrm{Cor}(\pi)} x_1^{i_1} \cdots x_d^{i_d}.
$$
Now setting $x_{i_1,\ldots, i_d} \to x_1^{i_1} \cdots x_d^{i_d}$ in the identity ($\diamond$) (in \textsection\ref{secphi}) 
we get
\begin{align*}
\sum_{\pi \in \mathcal{P}^{(d)}} \prod_{(i_1,\ldots, i_d, i_{d+1}) \in \mathrm{Cor}(\pi)} x_1^{i_1} \cdots x_d^{i_d} 
&=\prod_{i_1,\ldots, i_d \ge 1} \left(1 - x_1^{i_1} \cdots x_d^{i_d} \right)^{-1} \\
&= \sum_{n_1,\ldots, n_d} \mathsf{p}(n_1,\ldots, n_d) x_1^{n_1} \cdots x_{d}^{n_d}.
\end{align*}
Combining these identities we therefore have 
$$
\sum_{\pi \in \mathcal{P}^{(d)}} x_1^{c_1(\pi)} \cdots x_d^{c_d(\pi)} = \sum_{n_1,\ldots, n_d} \mathsf{p}(n_1,\ldots, n_d) x_1^{n_1} \cdots x_{d}^{n_d}, 
$$ 
which now implies the result.
\end{proof}

Now we show that the c-statistic does not exceed the volume which will be needed for getting the bounds below.

\begin{lemma}
Let $\pi \in \mathcal{P}^{(d)}$ and $\ell \in [d]$. Then $c_{\ell}(\pi) \le |\pi|$. 
\end{lemma}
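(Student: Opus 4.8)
The plan is to set up a counting argument by injection: to each corner $\mathbf{i}=(i_1,\ldots,i_d,i_{d+1})\in\mathrm{Cor}(\pi)$ I would attach a block of exactly $i_\ell$ cells of the diagram, namely the \emph{backward segment in direction $\ell$}
$$
S_\ell(\mathbf{i}) := \{(i_1,\ldots,i_{\ell-1},t,i_{\ell+1},\ldots,i_d,i_{d+1}) : 1\le t\le i_\ell\},
$$
and then argue that these blocks are pairwise disjoint subsets of $D(\pi)$. Granting this, summing the block sizes gives $c_\ell(\pi)=\sum_{\mathbf{i}\in\mathrm{Cor}(\pi)} i_\ell = \sum_{\mathbf{i}\in\mathrm{Cor}(\pi)} |S_\ell(\mathbf{i})| \le |D(\pi)| = |\pi|$. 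Note that the restriction $\ell\in[d]$ is what lets me leave the height coordinate $i_{d+1}$ fixed along each segment.

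First I would verify $S_\ell(\mathbf{i})\subseteq D(\pi)$. Since $\mathbf{i}\in D(\pi)$ we have $i_{d+1}\le \pi_{i_1,\ldots,i_d}$, and monotonicity of $\pi$ (weakly decreasing in each index) gives $\pi_{i_1,\ldots,t,\ldots,i_d}\ge \pi_{i_1,\ldots,i_\ell,\ldots,i_d}\ge i_{d+1}$ whenever $t\le i_\ell$, so every point of $S_\ell(\mathbf{i})$ lies in $D(\pi)$. This step is routine and uses only the definition of a $d$-dimensional partition.

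The hard part will be the disjointness, and it is exactly here that the corner condition is essential. Every point of $S_\ell(\mathbf{i})$ agrees with $\mathbf{i}$ in all coordinates except the $\ell$-th, so two segments $S_\ell(\mathbf{i})$ and $S_\ell(\mathbf{i}')$ can meet only when the corners $\mathbf{i}$ and $\mathbf{i}'$ coincide in every coordinate other than the $\ell$-th. I would then rule this out for distinct corners: if $\mathbf{i}' = \mathbf{i}+m\mathbf{e}_\ell\in D(\pi)$ with $m\ge 1$, then applying the downward-closure property of diagrams (if $\mathbf{j}+\mathbf{e}_\ell\in D(\pi)$ then $\mathbf{j}\in D(\pi)$) repeatedly forces $\mathbf{i}+\mathbf{e}_\ell\in D(\pi)$, contradicting that $\mathbf{i}$ is a corner. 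Hence the segments $S_\ell(\mathbf{i})$ are pairwise disjoint, the displayed chain of (in)equalities holds, and $c_\ell(\pi)\le |\pi|$ follows.
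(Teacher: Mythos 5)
Your proposal is correct and is essentially the paper's own proof: you form the same backward segments in direction $\ell$ (the paper calls them $X_{\ell}$), show they lie in $D(\pi)$ and are pairwise disjoint using the corner condition (which is exactly where $\ell \ne d+1$ is needed), and sum their sizes. You merely spell out in more detail the two steps the paper treats as immediate, namely the containment in $D(\pi)$ and the downward-closure contradiction ruling out two corners stacked in direction $\ell$.
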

\begin{proof}
For each corner $(i_1,\ldots, i_{d+1}) \in \mathrm{Cor}(\pi)$,  consider the set 
$$X_{\ell}(i_1,\ldots, i_{d+1}) :=  \{(i_1, \ldots, i_{\ell - 1}, i, i_{\ell + 1}, \ldots, i_{d+1}) : 1 \le i  \le i_{\ell} \} \subseteq D(\pi).$$ 
Notice that the sets $X_{\ell}(i_1,\ldots, i_{d+1})$ for $(i_1,\ldots, i_{d+1}) \in \mathrm{Cor}(\pi)$ are disjoint. Otherwise, a common element in sets $X_{\ell}$ must be produced from distinct corners with the same coordinates except for $\ell$-th one, which for $\ell \ne d+1$ is impossible by definition of corners. 
Hence, 
$$\bigsqcup_{(i_1,\ldots, i_{d+1}) \in \mathrm{Cor}(\pi)} X_{\ell}(i_1,\ldots, i_{d+1}) \subseteq D(\pi)$$
and we obtain that  
$$
c_{\ell}(\pi) = \sum_{(i_1,\ldots, i_{d+1}) \in \mathrm{Cor}(\pi)} i_{\ell} = \sum_{(i_1,\ldots, i_{d+1}) \in \mathrm{Cor}(\pi)} |X_{\ell}(i_1,\ldots, i_{d+1})| \le |D(\pi)| = |\pi|
$$
as needed. 
\end{proof}

\begin{remark}
Note that this inequality is sharp, e.g. let $\pi$ be the partition whose diagram is the simplex $\Delta_d(k)$ (defined in the previous section). Then we have $c_{\ell}(\pi) = |\pi|$ for all $\ell \in [d]$.  
\end{remark}

Let us also denote
$$
\widetilde{\mathsf{p}}(n_1, \ldots, n_d) := \sum_{i_1 \le n_1, \ldots, i_d \le n_d} \mathsf{p}(i_1,\ldots, i_d) \le n_1 \cdots n_d \cdot \mathsf{p}(n_1,\ldots, n_d). 
$$

Now we are ready to prove our main upper bound. 
\begin{theorem}\label{lempp}
We have the following inequality:
$$
\widetilde{\mathsf{p}}_d(n) \le \widetilde{\mathsf{p}}(\underbrace{n, \ldots, n}_{d \text{ times}}).
$$
\end{theorem}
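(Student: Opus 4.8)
The plan is to read both sides of the inequality as cardinalities of explicit families of $d$-dimensional partitions and then exhibit a set inclusion between them. Concretely, I would first unwind the right-hand side using Lemma~\ref{wp}. Since that lemma identifies $\mathsf{p}(i_1,\ldots,i_d)$ with the number of $\pi \in \mathcal{P}^{(d)}$ whose $c$-vector $(c_1(\pi),\ldots,c_d(\pi))$ equals $(i_1,\ldots,i_d)$, and since every partition has a single well-defined $c$-vector, summing over all $i_1 \le n, \ldots, i_d \le n$ splits $\mathcal{P}^{(d)}$ into disjoint classes. Hence
$$
\widetilde{\mathsf{p}}(n,\ldots,n) = \bigl|\{\pi \in \mathcal{P}^{(d)} : c_\ell(\pi) \le n \text{ for all } \ell \in [d]\}\bigr|.
$$

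Next I would rewrite the left-hand side in the same language. By definition $\widetilde{\mathsf{p}}_d(n)$ is the number of $d$-dimensional partitions of volume at most $n$, that is,
$$
\widetilde{\mathsf{p}}_d(n) = \bigl|\{\pi \in \mathcal{P}^{(d)} : |\pi| \le n\}\bigr|.
$$
With both quantities now expressed as cardinalities of subsets of the same ambient set $\mathcal{P}^{(d)}$, the desired inequality reduces to a containment of these two subsets.

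The crucial step is the bound $c_\ell(\pi) \le |\pi|$ established in the preceding lemma. If $|\pi| \le n$, then $c_\ell(\pi) \le |\pi| \le n$ for every $\ell \in [d]$, so the set counted by $\widetilde{\mathsf{p}}_d(n)$ is contained in the set counted by $\widetilde{\mathsf{p}}(n,\ldots,n)$. Passing to cardinalities yields the claim.

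I expect there to be essentially no computational obstacle here; the content is conceptual. The key realization is that one should \emph{not} search for a clever bijection between partitions of bounded volume and vector partitions, but instead use the identity map on $\mathcal{P}^{(d)}$, which succeeds precisely because the volume dominates each $c$-statistic. The only point requiring a little care is the disjointness of the $c$-vector classes when collapsing the sum defining $\widetilde{\mathsf{p}}(n,\ldots,n)$ into a single cardinality, and this is immediate since $(c_1(\pi),\ldots,c_d(\pi))$ is a function of $\pi$.
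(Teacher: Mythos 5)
Your proof is correct and matches the paper's own argument: both use $c_\ell(\pi) \le |\pi|$ to include the set of partitions of volume at most $n$ into the disjoint union of $c$-vector classes with all coordinates at most $n$, and then apply Lemma~\ref{wp} to identify the cardinality of that union with $\widetilde{\mathsf{p}}(n,\ldots,n)$. No gaps; the disjointness observation you flag is exactly the implicit point in the paper's use of $\bigsqcup$.
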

\begin{proof}
By previous lemma, every $d$-dimensional partition $\pi$ with volume at most $n$ has $c_{\ell}(\pi) \le n$ for all $\ell \in [d]$.
Hence we have 
$$
\left\{\pi \in \mathcal{P}^{(d)} : |\pi| \le n \right\} \subseteq \bigsqcup_{i_1,\ldots, i_d \le n} \left\{\pi \in \mathcal{P}^{(d)} : (c_1(\pi), \ldots, c_d(\pi)) = (i_1,\ldots, i_d) \right\}
$$
Taking cardinalities and using Lemma~\ref{wp} we obtain the inequality.
\end{proof}

The following (asymptotically tight) upper bound on vector partition numbers was proved in \cite{bv} (which is derived  using its generating function). 

\begin{lemma}[\cite{bv}]\label{lembv}
We have the following bound:
$$
\log \mathsf{p}(n_1,\ldots, n_d) \le (d+1) \zeta(d+1)^{1/(d+1)} (n_1 \cdots n_d)^{1/(d+1)}.
$$
\end{lemma}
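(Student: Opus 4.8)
The plan is to exploit the explicit product generating function together with the nonnegativity of all coefficients. Since every $\mathsf{p}(n_1,\ldots,n_d) \ge 0$, for any choice of real numbers $t_1,\ldots,t_d > 0$ the single coefficient is dominated by the whole series evaluated at $x_\ell = e^{-t_\ell}$, giving the Chernoff-type bound
$$
\mathsf{p}(n_1,\ldots,n_d) \le e^{t_1 n_1 + \cdots + t_d n_d} \prod_{i_1,\ldots,i_d \ge 1} \bigl(1 - e^{-(i_1 t_1 + \cdots + i_d t_d)}\bigr)^{-1}.
$$
Taking logarithms, it then remains to bound the quantity $S := -\sum_{i_1,\ldots,i_d \ge 1} \log\bigl(1 - e^{-(i_1 t_1 + \cdots + i_d t_d)}\bigr)$ and afterwards to optimize over the free parameters $t_\ell$.

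For the key estimate I would expand the logarithm as $-\log(1-u) = \sum_{m \ge 1} u^m/m$ and sum the resulting geometric series in each coordinate separately, obtaining
$$
S = \sum_{m \ge 1} \frac{1}{m} \prod_{\ell=1}^{d} \frac{1}{e^{m t_\ell} - 1}.
$$
Now the elementary inequality $e^u - 1 \ge u$ for $u > 0$ gives $1/(e^{m t_\ell} - 1) \le 1/(m t_\ell)$ for each factor, so
$$
S \le \sum_{m \ge 1} \frac{1}{m} \cdot \frac{1}{m^d\, t_1 \cdots t_d} = \frac{\zeta(d+1)}{t_1 \cdots t_d}.
$$
This produces the clean bound $\log \mathsf{p}(n_1,\ldots,n_d) \le \sum_{\ell} t_\ell n_\ell + \zeta(d+1)/(t_1\cdots t_d)$, valid for every positive choice of the $t_\ell$.

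Finally I would optimize. The crucial observation is that the $d+1$ summands $t_1 n_1,\ldots,t_d n_d,\ \zeta(d+1)/(t_1\cdots t_d)$ have a product equal to $\zeta(d+1)\, n_1 \cdots n_d$, which is independent of the $t_\ell$. Hence AM--GM immediately gives
$$
\sum_{\ell=1}^d t_\ell n_\ell + \frac{\zeta(d+1)}{t_1 \cdots t_d} \ge (d+1)\bigl(\zeta(d+1)\, n_1 \cdots n_d\bigr)^{1/(d+1)},
$$
and this lower bound is attained by taking $t_\ell = (\zeta(d+1)\, n_1 \cdots n_d)^{1/(d+1)}/n_\ell$, which makes all $d+1$ terms equal; substituting these values into the Chernoff bound yields exactly the claimed inequality.

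The argument is essentially soft, so I do not expect a serious obstacle; the only points requiring care are checking that the product converges for $t_\ell > 0$ (guaranteed by the geometric decay of the summands) and that interchanging the $m$-sum with the coordinatewise geometric summation is legitimate (all terms are positive, so this is Tonelli). The one genuinely useful idea is that the product of the $d+1$ terms does not depend on the $t_\ell$, which is what lets AM--GM deliver the sharp constant directly, with no calculus needed.
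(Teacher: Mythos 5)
Your proof is correct, but note that the paper itself contains no proof of this lemma: it is imported verbatim from B\'ar\'any--Vershik \cite{bv}, with only the remark that it ``is derived using its generating function.'' So there is nothing in the paper to compare against step by step; what you have written is a self-contained derivation of exactly the kind the citation gestures at. Your three steps are all sound: (1) the Chernoff-type bound $\mathsf{p}(n_1,\ldots,n_d) \le e^{\sum_\ell t_\ell n_\ell}\prod_{i_1,\ldots,i_d\ge 1}\bigl(1-e^{-(i_1t_1+\cdots+i_dt_d)}\bigr)^{-1}$ is legitimate because all coefficients of the generating series are nonnegative and the product converges for $t_\ell>0$; (2) the estimate $S \le \zeta(d+1)/(t_1\cdots t_d)$ via the expansion $-\log(1-u)=\sum_{m\ge 1}u^m/m$, Tonelli, the geometric summation $\sum_{i\ge 1}e^{-mti}=1/(e^{mt}-1)$, and $e^u-1\ge u$ is correct; (3) the choice $t_\ell = (\zeta(d+1)\,n_1\cdots n_d)^{1/(d+1)}/n_\ell$ does make all $d+1$ terms equal to $(\zeta(d+1)\,n_1\cdots n_d)^{1/(d+1)}$, giving precisely $(d+1)\zeta(d+1)^{1/(d+1)}(n_1\cdots n_d)^{1/(d+1)}$. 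One small remark: the AM--GM observation is not logically needed for the upper bound, since substituting the explicit $t_\ell$ already suffices; it only certifies that no better constant can be extracted from this particular Chernoff bound, which is worth keeping as a comment but should not be presented as a step of the inequality's proof.
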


Theorem~\ref{thm:one} now follows from Theorem~\ref{coral}, Theorem~\ref{lempp} and Lemma~\ref{lembv}. Theorem~\ref{thmc} follows from 
Theorem~\ref{lempp}.

\begin{remark}
Let us see what bounds Theorem~\ref{thm:one} gives for $d  \le 3$.
For $d = 1$ the upper bound is asymptotically tight since the number of integer partitions $\mathsf{p}_1(n)$ satisfies 
$$\frac{\log \mathsf{p}_1(n)}{n^{1/2}} \sim 2 \zeta(2)^{1/2} = \beta_1 = \gamma_1 $$ according to the Hardy--Ramanujan formula. 
For $d = 2$ the number of plane partitions $\mathsf{p}_2(n)$ has the asymptotics 
$$\frac{\log \mathsf{p}_2(n)}{n^{2/3}} \sim \gamma_2 = \frac{3 \zeta(3)^{1/3}}{2^{2/3}}  \approx 2.0094$$ 
while the above bounds range about $(1.2187, 3.1898)$. 
For $d = 3$ the asymptotics of the number $\mathsf{p}_3(n)$ of solid partitions is unknown, and the bounds we obtain give that
$$
1.2797 
< \frac{\log \mathsf{p}_{3}(n)}{n^{3/4}} < 
4.0799
$$
for sufficiently large $n$. It was estimated in \cite{dg} that 
$n^{-3/4}{\log \mathsf{p}_{3}(n)}{} \sim 1.822 \pm 0.001$.
\end{remark}

\subsection{An upper bound via MacMahon's numbers}\label{secmac}
For $\pi \in \mathcal{P}^{(d)}$, the {\it corner-hook volume} statistic $|\cdot|_{ch} : \mathcal{P}^{(d)} \to \mathbb{N}$ is defined as follows:
$$
|\pi|_{ch} := \sum_{(i_1,\ldots , i_{d+1})\, \in\, \mathrm{Cor}(\pi)} (i_1 +\ldots + i_d - d + 1) = c_{1}(\pi) + \cdots + c_{d}(\pi) - (d-1)\cdot |\mathrm{Cor}(\pi)|.
$$
MacMahon's numbers have the following interpretation via $d$-dimensional partitions obtained in \cite{ay}:
$$
\mathsf{m}_d(n) = \left| \left\{\pi \in \mathcal{P}^{(d)} : |\pi|_{ch} = n \right\} \right|.
$$
Similarly denote 
$
\widetilde{\mathsf{m}}_d(n) := \sum_{i \le n}^{} \mathsf{m}_d(i). 
$
Then the same proof as above shows that $|\pi|_{ch} \le d\, |\pi| - (d - 1) |\mathrm{Cor}(\pi)| \le |\pi| - (d - 1)$ and 
$$
\widetilde{\mathsf{p}}_d(n) \le \widetilde{\mathsf{m}}_d(dn - d + 1).
$$
In particular, we also have the inequality $\mathsf{p}_d(n) \le (dn - d + 1)\, \mathsf{m}_d(dn - d + 1)$.


\


\end{document}